\newtheorem{theorem}{Theorem}
\newtheorem{proposition}[theorem]{Proposition}
\newtheorem{corollary}[theorem]{Corollary}
\newtheorem{lemma}[theorem]{Lemma}
\newcommand{\R}{{ \rm Re\ }}
\newcommand{\la}{\lambda}
\newcommand{\Hh}{{\mathcal H}}
\newcommand{\RR}{\mathbb{R}}
\newcommand{\CC}{\mathbb{C}}
\newcommand{\NN}{\mathbb{N}}
\newcommand{\fra}{\mathfrak{a}}
\newcommand{\frh}{\mathfrak{h}}
\newcommand{\be}{\begin{equation}}
\newcommand{\bel}{\begin{equation}\label}
\newcommand{\ee}{\end{equation}}
\begin{document}
\date{}

\title[Lieb-Thirring Estimates]{Lieb-Thirring estimates for  non self-adjoint Schr\"odinger  operators}
%    Information for first author
\author{Vincent Bruneau}
\address{Universit\'e Bordeaux 1\\
Institut de Math\'ematiques de Bordeaux, CNRS UMR 5251\\
Equipe de Math\'ematiques Appliqu\'ees\\
351, Cours de la Lib\'eration\\
33405 Talence. France.}
\email{Vincent.Bruneau@math.u-bordeaux1.fr} 

\author{El Maati Ouhabaz}
\address{Universit\'e Bordeaux 1\\
Institut de Math\'ematiques de Bordeaux, CNRS UMR 5251\\
Equipe d'Analyse et G\'eom\'etrie\\
351, Cours de la Lib\'eration\\
33405 Talence. France.}
\email{Elmaati.Ouhabaz@math.u-bordeaux1.fr}

\date{\today}

\begin{abstract}
  For general non-symmetric operators $A$, we prove that the moment of order $\gamma \ge 1$ of  negative real-parts of its eigenvalues is bounded by the moment
  of order $\gamma$ of negative eigenvalues of its symmetric part $H = \frac{1}{2} [A + A^*].$  As an application, we obtain Lieb-Thirring estimates for  non self-adjoint Schr\"odinger operators. In particular, we recover recent results by Frank, Laptev, Lieb and Seiringer  \cite{FLLS}. We also discuss moment of resonances 
of Schr\"odinger  self-adjoint operators. 
  
\end{abstract}
 
 \maketitle{} 
 
 \section{Introduction} 
 The well known Lieb-Thirring estimates   for negative eigenvalues $\la_1,...,\la_N$  of   self-adjoint Schr\"odinger operators $-\Delta + V$ say that 
 \begin{equation}\label{0}
  \sum_{k=1}^N \vert \la_k \vert^\gamma  \le L_{\gamma,d} \int_{\RR^d} V_-^{\gamma + d/2} dx.
  \ee
 Here, $V_- := \max(0, -V)$ is the negative part of $V$  and the operator $-\Delta + V$ is considered on $L^2(\RR^d, dx).$  If $\gamma = 0$, then (\ref{0}) is the well known Cwickel-Lieb-Rozenblum  estimate on the number of negative eigenvalues.  The  estimate (\ref{0})  and its analogues   are  of importance in many problems of mathematical physics. We refer the reader to Lieb \cite{Li} for a discussion and applications of (\ref{0}).   In the last years, there is an increasing interest  for non-self-adjoint Schr\"odinger operators. We refer to the review paper of Davies \cite{Da}.  Abramov, Aslanyan and Davies \cite{AAD} have proved in the one dimensional case (i.e., $d= 1$ ) that  if $\la \notin \RR^+$ is an eigenvalue of the non-self-adjoint Schr\"odinger operator $-\Delta + V$ with complex-valued potential $V,$ then 
 $$ \vert \la \vert \le \frac{1}{4} \left( \int_{\RR} \vert V \vert dx \right)^2.$$
 More recently, Frank, Laptev, Lieb and Seiringer  \cite{FLLS} proved that if $\la_1,..., \la_N$ are eigenvalues of $-\Delta + V$ (with complex-valued potential $V$) such that 
 $\R \la_j < 0$, then
 \begin{equation}\label{01}
 \sum_j ( - \R \la_j)^\gamma \le L_{\gamma, d} \int_{\RR^d}\left( \R V \right)_-^{\gamma + d/2} dx
 \ee
 for  $\gamma \ge 1.$  The constant $L_{\gamma, d}$ is the same as in (\ref{0}). This gives the analogue  of (\ref{0}) for Schr\"odinger operators with complex valued potentials. We note in  passing that (\ref{01}) for  $\gamma \in [0, 1[$ is an open question. \\
 In this note we prove in an abstract setting that if $A$ is a given operator on a Hilbert space, $H := \frac{1}{2} [A + A^*]$ is its symmetric part (see the next section for the precise definitions), and $\gamma \ge 1$ is any constant, then 
 \bel{10} 
\sum_{j} (-\R  \la_j)^\gamma \le    {\rm Tr} (H)_-^\gamma. 
\ee
 Here, ${\rm Tr} (H)_-^\gamma := \sum_j (-\mu_j)^\gamma$ where $\mu_j$ are the negative eigenvalues of the self-adjoint operator $H$  and $\la_j$ are eigenvalues of 
 $A$ with negative real-parts. If $A = -\Delta + V$, then $H = -\Delta + \Re V$ and hence using  (\ref{0}) for the self-adjoint operator  $H$ we obtain (\ref{01}). We also obtain other estimates for moments of  eigenvalues of more general differential operators $A.$ As a consequence, we obtain estimates for moments of resonances of (self-adjoint) Schr\"odinger operators. 
  
 We mention  that (\ref{10}) can be considered  as an extension to
 infinite dimension of  Ky Fan's result (see Bhatia \cite{Bha},
 p. 74). It follows from a simple variational lemma (see Lemmas \ref{le1} and
 \ref{le2}). Such lemma, well known in finite dimensional space (see
 \cite{Bha}, p. 24), is also used by Fournais-Kachmar
 \cite{FouKac}. 
 Since the  proofs are natural and
 elementary, we state this  lemma in an abstract setting and  give all the details of proof.

 \section{Moments of negative eigenvalues for non-self-adjoint  operators}
 Let $\Hh$ be a complex Hilbert space. We denote by $ \langle.,.\rangle$ its scalar product and by $\Vert . \Vert := \sqrt{ \langle.,.\rangle}$ the corresponding norm.\\
 Let $\fra$ be a densely defined continuous  and closed (non-symmetric) form on $\Hh$. We assume that $\fra$ is bounded from below. This means that there exists a constant $\eta$ such 
 $$\R \fra(u,u ) + \eta  \Vert u \Vert^2   \ge 0 \ {\rm for\ all\ } u \in D(\fra).$$
 We shall denote by  $A$ its associated operator (see for example Chapter 1 in  \cite{Ou} for the definitions). 
 Let  $\frh $ be  the symmetric part of $\fra$, that is, 
 $$ \frh (u, v) := \frac{1}{2} \left(  \fra (u,v) + \overline{\fra (v,u)}  \right)  {\rm for\  all \ †}   †  u, v \in D(\frh) = D(\fra).$$
 The form $\frh$ is symmetric and its associated operator $H$ is self-adjoint.\footnote{Formaly, one writes $H = \frac{1}{2}\left[ A + A^* \right]$.}
 Of course, $H$ is bounded from below. \\
 It is a classical fact that the spectrum $\sigma(A)$ is contained in some sector of the complex plane. In general, there is no  relationship between the spectrum 
 of the two operators $A$ and $H.$  The main result  in this section gives an estimate of the moments of  eigenvalues  of $A$  having  negative real-parts in terms of the same quantity corresponding to  $H.$  
\begin{theorem}\label{A-H}
i) If the negative spectrum of $H$ is empty, then $A$ has no eigenvalue in $\{ z \in \CC; \; \R z<0\}$.

ii) Assume now that the negative spectrum of $H$ is  discrete. 
Let $\la_1, ..., \la_N$ be any  finite family of eigenvalues of $A$ with $\R \la_j < 0$ for all $j$ (an eigenvalue with algebraic multiplicity 
$k > 1$ might occur $n$ times with $n \le k$). Let $\gamma \ge 1.$ Then
\bel{1}
\sum_{k=1}^N (-\R  \la_k)^\gamma \le    
 {\rm Tr} (H)_-^\gamma. 
\ee
\end{theorem}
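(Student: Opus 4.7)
Part (i) follows from a one-line computation: if $Au=\la u$ with $\|u\|=1$ and $H\ge 0$, then $\R\la = \R\fra(u,u) = \frh(u,u) = \langle Hu,u\rangle \ge 0$, contradicting $\R\la<0$. I therefore focus on (ii). The plan is to reduce to a finite-dimensional problem by restricting $A$ and $H$ to a well-chosen $A$-invariant subspace of dimension $N$. For each $\la_j$ in the prescribed list I would select a generalized eigenvector $u_j$, taking care, when $\la_j$ occurs with multiplicity $n<k$, to pick initial segments of Jordan chains so that the family $u_1,\dots,u_N$ is linearly independent and $V := {\rm span}(u_1,\dots,u_N)$ is $A$-invariant. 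Then $A|_V$ is a linear map on an $N$-dimensional Hilbert space whose eigenvalues, counted with multiplicity, are exactly $\la_1,\dots,\la_N$.

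Next I would apply the finite-dimensional Schur triangularization to $A|_V$: there exists an orthonormal basis $w_1,\dots,w_N$ of $V$ in which $A|_V$ is upper triangular with diagonal entries $\la_1,\dots,\la_N$. Writing $P_V$ for the orthogonal projection onto $V$, the self-adjoint compression $H_V := P_V H P_V|_V$ coincides with the symmetric part of $A|_V$ viewed as an operator on $V$; hence in the orthonormal basis $(w_j)$ its diagonal entries are $\R\la_1,\dots,\R\la_N$. Let $\nu_1\le\dots\le\nu_N$ denote the eigenvalues of $H_V$ and $\mu_1\le\mu_2\le\dots$ the negative eigenvalues of $H$. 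The Schur--Horn majorization theorem then yields $(\R\la_j)_{j=1}^N\prec(\nu_j)_{j=1}^N$, which for every convex function $f$ gives
$$
\sum_{j=1}^N f(\R\la_j) \; \le \; \sum_{j=1}^N f(\nu_j).
$$
Taking $f(t)=(-t)_+^\ga$, convex for $\ga\ge 1$, and using the min-max inequality $\nu_j\ge\mu_j$, I obtain
$$
\sum_{j=1}^N (-\R\la_j)^\ga \; \le \; \sum_{j=1}^N (-\nu_j)_+^\ga \; \le \; \sum_{j}(-\mu_j)^\ga = {\rm Tr}(H)_-^\ga,
$$
which is (\ref{1}).

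The only real technical difficulty is the bookkeeping of the first step: producing $N$ linearly independent generalized eigenvectors whose span is genuinely $A$-invariant when some $\la_j$ are repeated with $n<k$. Once this selection is in hand, everything else is standard finite-dimensional matrix analysis on $V$. An alternative route, perhaps closer to the ``variational lemma'' flavor hinted at in the introduction, is to first prove the case $\ga=1$ via Ky Fan's trace inequality (the special case $f(t)=-t$ of the majorization above, applied to the orthonormal basis $(w_j)$) and then lift to all $\ga\ge 1$ via the Aizenman--Lieb identity $a^\ga = \ga(\ga-1)\int_0^\infty t^{\ga-2}(a-t)_+\,dt$ applied term by term to the $\ga=1$ inequality for the shifted operators $A+tI$ and $H+tI$.
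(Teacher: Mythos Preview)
Your argument is correct. The construction of the $A$-invariant subspace $V$ works as you describe (truncating Jordan chains inside each $\ker(A-\la_j)^m$ for $m$ large enough that the dimension reaches the required count; this is finite-dimensional even when the full algebraic multiplicity is infinite), and once you are on $V$ the Schur--Horn majorization plus min-max comparison $\nu_j\ge E_j(H)$ is exactly what is needed. One small wording point: in part (i) and in the definition of $H_V$, the vectors need only lie in $D(\frh)=D(\fra)$, not in $D(H)$, so $\langle Hu,u\rangle$ should be read as $\frh(u,u)$; you do this implicitly but it is worth saying.

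The paper's proof differs in organization rather than in essence. It also starts from the Schur upper-triangularization, obtaining an orthonormal family $(u_j)$ with $\langle Au_j,u_j\rangle=\la_j$, hence $\sum_j\R\la_j=\sum_j\frh(u_j,u_j)$. But instead of passing through the compression $H_V$ and Schur--Horn, it applies directly the Ky~Fan variational principle for $H$ (proved as two self-contained lemmas): for any orthonormal $N$-tuple in the form domain, $\sum_j\frh(u_j,u_j)\ge\sum_{j=1}^N E_j(H)$, whence the case $\ga=1$. The passage to $\ga>1$ is then done exactly by the Aizenman--Lieb integral identity you mention as an alternative. So your ``alternative route'' \emph{is} the paper's proof. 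What your primary route buys is that Schur--Horn plus the convexity of $t\mapsto(-t)_+^\ga$ handles all $\ga\ge1$ in a single stroke, with no separate integral trick; what the paper's route buys is that it avoids invoking the full majorization theorem and keeps everything at the level of an elementary variational lemma, which it proves from scratch.
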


Recall that the algebraic multiplicity $m_a(\la)$ of an eigenvalue $\la$ of $A$ is defined by
$$ m_a(\la) := \sup_{k \in \NN}  \dim† \ker (\la I - A)^k,$$
where dim denotes  the dimension of the  Kernel  of $(\la I - A)^k.$ 
The algebraic multiplicity  can be infinite. However,  a direct
consequence of the above  theorem is that the algebraic multiplicity
of any eigenvalue of $A$ with negative real-part is finite provided ${\rm Tr} (H)_- < \infty$. Indeed, fix an eigenvalue $\la$  with negative real-part and 
 let $k < \infty$ with $k$ less or equal to $m_a(\la).$ The estimate (\ref{1}) applied to the single eigenvalue $\la$ (with $\gamma  = 1$)  gives
 $$ k (-\R \la) \le   {\rm Tr} (H)_-.$$
 Since this holds for every finite $k \le m_a(\la)$, one  has 
 $m_a(\la) (-\R \la) \le   {\rm Tr} (H)_- < \infty.$ \\
 We also recall that for self-adjoint operators, both algebraic and geometric dimensions coincide. 
 
 One can also withdraw a similar conclusion  for eigenvalues with real-parts less than any fixed value $t$  under the condition that  the spectrum of $H$  below $t$ is discrete  (just replace $A$ by $A - tI$, $H$ is then  replaced by $H -tI$). 
 
 Another consequence is that the co-dimension of range of $ (\overline{\la} I -A)$ is finite for every  $\la$ in the residual spectrum of $A$ with $\R \la < 0.$  The obvious reason is that by duality,   $\overline{\la}$  turns to be  in the  point spectrum of its adjoint $A^*.$ The symmetric part of $A^*$ is also $H$. We then  apply  the previous observations to $A^*.$

The proof  of the above theorem is based on the following variational
lemmas for the sum of eigenvalues of a self-adjoint operator.  As
mentioned in the introduction, these are infinite dimensional versions
of Ky Fan's maximum principle \cite{Bha}. 
 \begin{lemma}\label{le1} Let $(H,D(H))$ be a self-adjoint operator in a Hilbert space $\Hh$ (with scalar product  $\langle . ,.\rangle$). Suppose the spectrum of $H$ is discrete on $]- \infty,E[$, where $E$ is any fixed real number. 

Let $E_1 \leq E_2 \leq  \cdots \leq E_N$ be the  first  $N$ eigenvalues of $H$ (in $]- \infty,E[$) repeated according to  their multiplicities (eigenvalues that are  equal to $E_N$ are repeated $k$ times with $k \leq m_a(E_N)$).
 
Then we have:
\bel{sum}
\sum_{n=1}^N E_n  = \inf \sum_{n=1}^N \langle Hu_n,u_n\rangle
\ee
where the infimum is taken over all orthonormal sets  $\{u_1, \cdots, u_N \} \subset  D(H).$ 
\end{lemma}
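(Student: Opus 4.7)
The plan is to prove the two inequalities in (\ref{sum}) separately. The upper bound is immediate: if $\phi_1,\dots,\phi_N\in D(H)$ are orthonormal eigenvectors with $H\phi_n=E_n\phi_n$, then testing the right-hand side on them gives $\sum_{n=1}^N\langle H\phi_n,\phi_n\rangle=\sum_{n=1}^N E_n$, so the infimum in (\ref{sum}) is at most $\sum_{n=1}^N E_n$.

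For the lower bound, let $\{u_1,\dots,u_N\}\subset D(H)$ be any orthonormal family. My strategy is to expand each $u_n$ along the spectral resolution of $H$ and reduce the inequality to a finite-dimensional linear-programming fact of Ky Fan type. Let $P$ denote the spectral projection of $H$ onto $]-\infty,E[\,$; by the discreteness hypothesis, $P$ is the sum of the finite-rank eigenprojections at the eigenvalues $E_1\le E_2\le\cdots$ below $E$, with orthonormal eigenbasis $\{\phi_k\}_{k\ge 1}$. Since $P$ commutes with $H$ and $u_n\in D(H)$, both $Pu_n$ and $(I-P)u_n$ lie in $D(H)$, and the spectral theorem yields
\bel{decomp}
\sum_{n=1}^N\langle Hu_n,u_n\rangle \;=\; \sum_{k\ge 1}E_k\,p_k \;+\; \sum_{n=1}^N\langle H(I-P)u_n,(I-P)u_n\rangle,
\ee
where $p_k:=\sum_{n=1}^N|\langle u_n,\phi_k\rangle|^2$. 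Since the spectrum of $H$ on the range of $I-P$ lies in $[E,\infty)$, the second sum is bounded below by $Eq$ with $q:=\sum_{n=1}^N\|(I-P)u_n\|^2$.

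It then remains to minimize $\sum_k E_k p_k+Eq$ under the constraints that orthonormality of $\{u_n\}$ forces: $0\le p_k\le 1$, $q\ge 0$, and $\sum_k p_k+q=N$. The bound $p_k\le 1$ follows from $p_k=\|Q\phi_k\|^2$, where $Q$ is the orthogonal projection onto $\mathrm{span}\{u_1,\dots,u_N\}$; the identity $\sum_k p_k+q=N$ is just $\sum_n\|u_n\|^2=N$. Rewriting $\sum_k E_k p_k+Eq \,=\, EN-\sum_k(E-E_k)p_k$ and using that the weights $E-E_k>0$ are non-increasing in $k$, the maximum of $\sum_k(E-E_k)p_k$ under the constraints is attained at $p_1=\cdots=p_N=1$, $p_k=0$ for $k>N$, and $q=0$, yielding the desired lower bound $\sum_{n=1}^N E_n$. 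The main subtlety is the justification of (\ref{decomp}) and the convergence of the series $\sum_k E_k p_k$, both of which follow from the spectral theorem applied to the unbounded self-adjoint operator $H$ (the rewriting as a maximization with positive weights also handles conditional convergence cleanly); the terminal linear-programming step is elementary and is the infinite-dimensional analogue of Ky Fan's principle recalled in the introduction.
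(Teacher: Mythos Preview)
Your argument is correct and follows the same Ky Fan idea as the paper, but with a slightly different decomposition. The paper projects each $u_k$ onto the span of only the first $N$ eigenvectors $e_1,\dots,e_N$, writes $u_k=\sum_{j\le N}\langle u_k,e_j\rangle e_j+r_k$, bounds the remainder by $\langle Hr_k,r_k\rangle\ge\Sigma_N\|r_k\|^2$ with $\Sigma_N\ge E_N$, and finishes with a direct algebraic identity using $\sum_k\|r_k\|^2=\sum_{j\le N}(1-\sum_k|\langle u_k,e_j\rangle|^2)$. Everything stays finite-dimensional, so no convergence questions arise. You instead project onto the full spectral subspace for $(-\infty,E)$, which may be infinite-dimensional, and then resolve the remaining inequality by the linear-programming rewriting $\sum_k E_kp_k+Eq=EN-\sum_k(E-E_k)p_k$ with positive non-increasing weights $E-E_k$. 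Your route is a touch heavier (you have to remark on absolute convergence of $\sum_k E_kp_k$, which does follow from $u_n\in D(H)$), but it makes the optimization structure very transparent; the paper's route is more elementary in that it avoids infinite sums entirely by cutting at index $N$ rather than at the spectral level $E$.
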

\begin{proof}
Let  $\{e_j\}_{1\leq j \leq N}$ be an orthonormal family of eigenvectors associated to the eigenvalues $\{E_j\}_{1\leq j \leq N}$. 
Then for any orthonormal family $\{u_k\}_{1\leq k \leq N}$ we have:
\bel{rien}
u_k= \sum_{j=1}^N \langle u_k,e_j \rangle e_j+ r_k,
\ee
with $r_k  \in  Vect \{e_1,...,e_N\}^\perp$
(the orthogonal subspace to $e_1,...,e_N$). 
Due to the orthonormal properties, we have:
\bel{pythagore}
1= \| u_k \|^2 = \sum_{j=1}^N |\langle u_k,e_j \rangle|^2 + \| r_k \|^2 ,
\ee
and
\bel{pythagoreH}
\langle Hu_k,u_k \rangle= \sum_{j=1}^N E_j \, |\langle u_k,e_j \rangle|^2 + \langle Hr_k,r_k \rangle .
\ee
By the spectral theorem (applied to the part of $H$ on $Vect \{e_1,...,e_N\}^\perp$),  we have:
$\langle Hr_k,r_k \rangle \geq \Sigma_N \| r_k \|^2$ with 
$$\Sigma_N \ge E_N.$$ 
The value $\Sigma_N$ is just the bottom of the spectrum of the part of $H$ on  $Vect \{e_1,...,e_N\}^\perp.$
 Therefore,  
\bel{m1}
\sum_{k=1}^N \langle Hu_k,u_k\rangle \geq \sum_{j=1}^N E_j \, \sum_{k=1}^N |\langle u_k,e_j \rangle|^2 + \Sigma_N \sum_{k=1}^N\| r_k \|^2.
\ee
According to (\ref{pythagore}), we have:
\bel{e1}
\sum_{k=1}^N\| r_k \|^2 = \sum_{k=1}^N \Big( 1 - \sum_{j=1}^N |\langle u_k,e_j \rangle|^2\Big)
=\sum_{j=1}^N \Big( 1 - \sum_{k=1}^N |\langle u_k,e_j \rangle|^2\Big).
\ee
On the other hand, exploiting the fact that $\{u_k\}$ is an orthonormal family and that $\|e_j\|=1$, we have
$$ 1 - \sum_{k=1}^N |\langle u_k,e_j \rangle|^2 = \|e_j\|^2 - \sum_{k=1}^N |\langle u_k,e_j \rangle|^2 \geq 
0.$$
Consequently, combining (\ref{m1}) with  (\ref{e1}), and using that for any $1 \leq j \leq N$, $\Sigma_N \geq E_j$, we obtain:
\begin{eqnarray*}
\sum_{k=1}^N \langle Hu_k,u_k\rangle  &\geq&  \sum_{j=1}^N E_j \,  \sum_{k=1}^N|\langle u_k,e_j \rangle|^2 +
\sum_{j=1}^N E_j \Big( 1 - \sum_{k=1}^N |\langle u_k,e_j \rangle|^2\Big)\\
& =&  \sum_{j=1}^N E_j .
\end{eqnarray*}
Clearly, the equality holds for $(u_1, \cdots, u_N) = (e_1, \cdots, e_N) $ and (\ref{sum}) follows.
\end{proof}
The following lemma is a variation of the previous one. Suppose we have  $N$ vectors $u_j$ and $N' $ eigenvalues  $E_1\le E_2\le ... \le E_{N'}$ (with $N' \le N$). We denote again by $e_1,...,e_{N'}$ the corresponding orthonormal family  of corresponding eigenvectors.  Let $\Sigma_{N'}$ be  the infimum of the spectrum of the part of $H$ on $Vect \{e_1,...,e_{N'}\}^\perp.$ Of course 
$$\Sigma_{N'} \ge E_{N'}.$$
Now we have
 \begin{lemma}\label{le2} 
 Let $H$ be a self-adjoint operator in $\Hh$ and $E_1 \leq E_2 \leq  \cdots \leq E_{N'}$ be the $N'$ first eigenvalues of $H$. Let ${u_1,...,u_N}$ be an orthonormal set such that $u_k \in D(H)$ for each $k.$ 
 Then, for $N\geq N'$ we have
$$\sum_{k = 1}^N  \langle Hu_k,u_k\rangle  \ge E_1 +...+E_{N'} + (N-N') \Sigma_{N'}.$$
\end{lemma}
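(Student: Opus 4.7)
The plan is to mimic the strategy of Lemma \ref{le1}, decomposing each $u_k$ along the first $N'$ eigenvectors of $H$ and then handling the deficit caused by $N > N'$ via the bound $\Sigma_{N'}$ on the orthogonal complement.

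First I would introduce an orthonormal family $\{e_1,\dots,e_{N'}\}$ of eigenvectors associated to $E_1,\dots,E_{N'}$ and write, for each $1 \le k \le N$,
$$u_k = \sum_{j=1}^{N'} \langle u_k,e_j\rangle e_j + r_k,$$
with $r_k$ orthogonal to $\mathrm{Vect}\{e_1,\dots,e_{N'}\}$. Exactly as in the proof of Lemma \ref{le1}, this gives
$$1=\|u_k\|^2=\sum_{j=1}^{N'}|\langle u_k,e_j\rangle|^2+\|r_k\|^2$$
and
$$\langle Hu_k,u_k\rangle=\sum_{j=1}^{N'}E_j|\langle u_k,e_j\rangle|^2+\langle Hr_k,r_k\rangle.$$
By the spectral theorem applied to the restriction of $H$ to $\mathrm{Vect}\{e_1,\dots,e_{N'}\}^\perp$, $\langle Hr_k,r_k\rangle \ge \Sigma_{N'}\|r_k\|^2$.

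Next, summing over $k$ and setting $\alpha_j := \sum_{k=1}^N |\langle u_k,e_j\rangle|^2$ I would obtain
$$\sum_{k=1}^N\langle Hu_k,u_k\rangle \ge \sum_{j=1}^{N'}E_j\,\alpha_j + \Sigma_{N'}\Bigl(N-\sum_{j=1}^{N'}\alpha_j\Bigr).$$
Because $\{u_1,\dots,u_N\}$ is orthonormal and $\|e_j\|=1$, Bessel's inequality gives $0\le \alpha_j \le 1$ for every $j \le N'$.

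The remaining step, which is the core of the argument, is to reduce the right-hand side to $\sum_{j=1}^{N'} E_j + (N-N')\Sigma_{N'}$. Rewriting the target inequality as
$$\sum_{j=1}^{N'}(E_j-\Sigma_{N'})\alpha_j \ge \sum_{j=1}^{N'}(E_j-\Sigma_{N'}),$$
it suffices to observe that $E_j-\Sigma_{N'}\le 0$ (since $\Sigma_{N'}\ge E_{N'}\ge E_j$) and $\alpha_j-1\le 0$, so the product $(E_j-\Sigma_{N'})(\alpha_j-1)$ is nonnegative termwise. Summing yields precisely the desired inequality. The only delicate point worth double-checking is the direction of the inequality when multiplying by $\alpha_j\le 1$ a nonpositive quantity; once this sign bookkeeping is done, the conclusion is immediate.
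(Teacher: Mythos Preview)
Your proof is correct and follows essentially the same route as the paper: decompose each $u_k$ along $e_1,\dots,e_{N'}$, bound the remainder via $\Sigma_{N'}$, and use $\alpha_j\le 1$ together with $\Sigma_{N'}\ge E_j$ to conclude. The only cosmetic difference is that the paper (``arguing as before'') would replace $\Sigma_{N'}(1-\alpha_j)$ directly by $E_j(1-\alpha_j)$, whereas you phrase the same step as $(E_j-\Sigma_{N'})(\alpha_j-1)\ge 0$; these are the same inequality.
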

The   proof is similar to that of the previous lemma.   It suffices to replace (\ref{rien}) by
$$ u_k= \sum_{j=1}^{N'} \langle u_k,e_j \rangle e_j+ r_k,$$
and argue as before.
 
\noindent {\bf Remark.} In the previous lemmas, it is not necessary to have $u_k \in D(H)$. It is enough to have $u_k$ in the domain of the quadratic form of $H$ (the later coincides with
$D(\sqrt{H})$). In that case $ \langle Hu_k, u_k\rangle$ is understood in the quadratic form sense and equals  $ \langle \sqrt{H}u_k, \sqrt{H}u_k \rangle.$ \\

\begin{proof} [Proof of Theorem \ref{A-H}] 
i) If the negative spectrum of $H$ is empty, then $H$ is a non negative operator. Consequently each eigenvalue of $A$, $\la_j$ associated to an eigenvector $u_j$ satisfies: 
$$\R  \la_j = \R  \langle A u_j, u_j \rangle =  \langle H u_j , u_j \rangle \geq 0.$$
ii)  Assume now that 
$\la_1,...,\la_N$ are eigenvalues of $A$ such that $\R \la_j < 0$ for $j=1,...,N.$

1) We consider first the case $\gamma = 1.$ Following   \cite{FLLS},  if  $\la_j$ has algebraic multiplicity $k$, then by the upper triangular representation one finds an orthonormal family $v_1,...,v_k$ such that
$$ A v_l = \la_j v_l + \sum_{k<l} \alpha_{kl} v_k.$$
Using this for each $\la_j$,  we obtain  an orthonormal family $\{u_1, ..., u_N \}$ which satisfies the above property. Taking the scalar product with $u_j$ yields 
\bel{f1}
 \langle A u_j, u_j \rangle = \la_j  \  {\rm  for†} †\ j= 1,...,N.
 \ee
Taking the sum, we obtain
\bel{f2}
\sum_{n=1}^N \R \la_n = \sum_{n=1}^N  \R \langle A u_n , u_n \rangle = \sum_{n=1}^N   \langle H u_n , u_n \rangle.
\ee
Here $\langle H u_n , u_n \rangle $ is understood  in the quadratic form sense because it is not clear whether $u_n \in D(H)$ (however $u_n \in D(\fra) = D(\frh)$). 
Now we apply the previous lemmas. For this, we proceed in two steps.\\
Assume that $H$ has only a finite number of negative eigenvalues. Denote these negative eigenvalues 
 by $\mu_1 \le ... \le \mu_M$ (all them are  repeated according to their multiplicities). 
If $M \le N$, we apply Lemma \ref{le2} with $N'=M.$  Note that  $\Sigma_M\geq 0$ because $H$ has only eigenvalues below $0$  (by assumption) and these eigenvalues are  $\mu_1 \le ... \le \mu_M$. Hence 
\begin{eqnarray*}
\sum_{n=1}^N \R \la_n & =&  \sum_{n=1}^N   \langle H u_n , u_n \rangle\\
&\ge& \mu_1+ ... + \mu_M\\
&=& - {\rm Tr}(H)_-.
\end{eqnarray*}
If  $M > N$,   we apply Lemma \ref{le2} with $N'=N$  and obtain
\begin{eqnarray*}
  \sum_{n=1}^N   \langle H u_n , u_n \rangle   
  &\ge&  \mu_1+ ... + \mu_{N-1} + \mu_N\\
  &\ge&  \mu_1+ ... + \mu_{N-1} + \mu_N + \mu_{N+1} + ... + \mu_M\\
  &=& - {\rm Tr}(H)_-.
  \end{eqnarray*}
  Assume now that $H$ has infinite number of eigenvalues below $0.$ We choose  $M > N$ and let $\mu_1 \le ... \le \mu_M$ be the $M$ first eigenvalues. Now we proceed as above by applying Lemma \ref{le2} with $N'=N$ and obtain
  $$ \sum_{n=1}^N   \langle H u_n , u_n \rangle \ge \mu_1+...+ \mu_N \ge - {\rm Tr}(H)_-.$$
Thus, in all cases, we have
\bel{f3}
\sum_{n=1}^N  \R \la_n \ge - {\rm Tr}(H)_-,
\ee
which proves the theorem when $\gamma =1.$\\

2) The case $\gamma > 1$ follows from the previous case. The proof uses an idea of Aizenman-Lieb \cite{AL}. It was also used in \cite{FLLS}. We follow the same 
arguments as in the proof of Lemma 1 of the later references. 
Denote again  $f_- := \max(- f, 0)$.  There exists a constant $C_\gamma$ such that 
$$ C_\gamma s_-^\gamma = \int_0^\infty t^{\gamma -2} (s+t)_-dt.$$
Therefore, applying the case $\gamma = 1$ to the operators $A + t I$ and $H + tI$, we obtain
\begin{eqnarray*}
C_\gamma \sum_{n =1}^N (-\R \la_n)^\gamma &=&  \int_0^\infty t^{\gamma -2} \sum_{n =1}^N (\R \la_n + t)_ - dt\\
& \le & -  \int_0^\infty t^{\gamma -2} {\rm Tr}(H+tI)_- dt\\
&=& C_\gamma {\rm Tr}(H)_-^\gamma.
\end{eqnarray*}
This proves the theorem.
\end{proof}

In general settings the above result does not hold for $\gamma < 1.$ The following elementary example was communicated to us by  J.F. Bony.  We thank him for 
fruitful discussion. \\

Consider on  $H := \ell_2(\NN)$  the finite rank operator $A$ defined as follows. Fix $n$ large enough and  let 
\[
Au(j) := \left\{
\begin{array}{ll} 
 - u(j)- 2u(j+1)- ...- 2u(n),† &   †1 \le j < n\\
 - u(n), &   †j = n\\
 0,  &    j > n
 \end{array}
 \right.
\]
Clearly, $-1$ is the only non zero eigenvalue of $A$ and its algebraic multiplicity is $n.$ The symmetric part $H$ of $A$ has only $-n$ as a negative eigenvalue (it has multiplicity $1$). Therefore,
$\sum_{j=1}^n 1^\gamma = n$,  Tr$(H)_-^\gamma = n^\gamma$ and  the inequality
$n \le C  n^\gamma$ cannot hold for any constant $C$  (independently of $n$ when  $\gamma < 1$).

\section{Application to non self-adjoint Schr\"odinger operators}
In this section we consider  non self-adjoint Schr\"odinger  operators 
$$P:=-\Delta + i \Big( a(x).\nabla+ \nabla.a(x)\Big) + V$$
 where $V$ is a complex-valued potential and $a$ is a complex-valued vector field. \\
We first describe how $P$ is defined. Consider the symmetric  non-negative sesquilinear form
$${\mathfrak{e}}_0(u,v) :=  \int_{\RR^d} \left(i\nabla + \R a \right)u \overline{ \left(i\nabla + \R a \right) v}  dx + \int_{\RR^d} (\R V)_+ u \overline{v} dx,$$
defined on the space $C_c^\infty$ of $C^\infty$ functions with compact support. For $\R a \in L^2_{loc}$ and $(\R V)_+ \in L^1_{loc}$, this form is closable 
(see \cite{Sim78}). We denote again by $\mathfrak{e}_0$ its closure.\footnote{the operator associated with this form is the magnetic Schr\"odinger operator
$ \left(i\nabla + \R a \right)^2 + (\R V)_+.$}
Consider now the (non-symmetric) sesquilinear form $\mathfrak{e} := \mathfrak{e}_0 + \mathfrak{e}_1$ where 
\begin{eqnarray*}
\mathfrak{e}_1(u,v) :=&& \int_{\RR^d} \left\{ {\rm Im\ } a(x) u \nabla \overline{v}  - {\rm Im\ }  a(x) \nabla u \overline{v} \right\} dx +\\ 
&&\int_{\RR^d} \left\{ - \vert \R a(x) \vert^2 - (\R V)_- + i {\rm Im\ }  V \right\} u \overline{v}dx.
\end{eqnarray*}
We assume that there exist two constants $\beta \in [0, 1[$ and $c_\beta \in \RR$ such that for every $u \in D(\mathfrak{e}_0)$
\begin{equation}\label{pert}
 \vert \mathfrak{e}_1(u,u) \vert \le \beta \mathfrak{e}_0(u,u) + c_\beta \int_{\RR^d} \vert u \vert^2 dx
 \ee
(in particular, this holds if  ${\rm Im\ }  a, (\R V)_-, {\rm Im\ }  V \in L^\infty(\RR^d)$). By the well known KLMN theorem (see for example \cite{Ka} p. 320  or \cite{Ou}, p. 12), the form 
$\mathfrak{e}$, with domain $D(\mathfrak{e}) = D(\mathfrak{e}_0)$, is well defined as is closed. One can then associate with $\mathfrak{e}$ an operator. Formally, this operator is given by $P$ above.  In the sequel, we assume that $\R a \in L^2_{loc}$, $(\R V)_+ \in L^1_{loc}$ and (\ref{pert}) are satisfied and $P$ will be the associated operator with $\mathfrak{e}.$

We want to study   Lieb-Thirring estimates  for $P$ 
(for moments of eigenvalues  having negative real-parts). The next  theorem   was  proved recently in \cite{FLLS} for potential perturbations. Our proof is easier and applies in  many situations (we can  consider for example operators with boundary conditions on domains or Schr\"odinger operators  on some manifolds). 

For real-valued potentials $W$ and real vector field $b$, the well-known Lieb-Thirring estimates for negative eigenvalues $\mu_j $ of
Schr\"odinger (self-adjoint) operators  $(i\nabla +b)^2 + W$  say that 
\bel{LTR} 
  {\rm Tr}((i\nabla +b)^2 + W)_-^\gamma := \sum_{\mu_j < 0} (-\la_j)^\gamma   \le L_{\gamma,d} \int_{\RR^d} W(x)_-^{\gamma + d/2} dx,
\ee
where $L_{\gamma, d}$ is a positive constant. See \cite{Si}, \cite{LT} for details.  Here $\gamma \ge 1/2$  if $d =1, \gamma > 0$ if $d=2$ and $\gamma \ge 0$ for $d \ge 3.$  For information concerning the   best value of the constant $L_{\gamma,d}$ see \cite{LW}, \cite{DLL}.  In the sequel, we shall refer to $L_{\gamma,d}$ as the best possible constant for which (\ref{LTR}) holds.  
For complex-valued potentials $V$ and  complex-valued vector fields $a,$ we introduce the following family of self-adjoint Schr\"odinger  operators:
\begin{equation}\label{defHalpha}
H(\alpha) := (i\nabla + b(\alpha))^2 -  |b(\alpha)|^2 + W(\alpha); \quad \alpha \in [-\frac{\pi}{2},\frac{\pi}{2}]\setminus\{0\} 
\end{equation}
with 
$$b(\alpha):= \frac{1}{ \sin \alpha}  \R(e^{-i(\alpha - \frac{\pi}{2})} a)= \R a - (\cot \alpha) {\rm Im†} a$$ 
and 
$$W(\alpha):= \frac{1}{ \sin \alpha}  \R(e^{-i(\alpha - \frac{\pi}{2})} V)=\R V -(\cot \alpha) {\rm Im†} V.$$
   We have
\begin{theorem}\label{LT} 
i) If $\alpha \in ]0,\frac{\pi}2]$ (respectively, $-\alpha \in ]0,\frac{\pi}2]$) is such that $ H(\alpha)$ is non negative, then
$$\sigma (P) \subset  e^{i[\alpha - \pi, \alpha]}{\RR}^+ \quad  ({\rm respectively,} \,† \sigma (P) \subset  - e^{i[\alpha - \pi, \alpha]}{\RR}^+).$$

ii) Fix $ \alpha \in ]0,\frac{\pi}2] $  (respectively,  $-\alpha \in ]0,\frac{\pi}2]$)  such that 
$$\Big(W(|\alpha|) - |b(|\alpha|)|^2\Big)_-  \in L^{\gamma + d/2}(\RR^d).$$
Let $\la_1, ..., \la_N$ be any  finite family of eigenvalues of $P$ contained outside the sector  $  e^{i[\alpha - \pi, \alpha]}{\RR}^+$ (respectively,  $- e^{i[\alpha - \pi, \alpha]}{\RR}^+$)  for all $j$ (an eigenvalue with algebraic multiplicity $k > 1$ might occur $n$ times with $n \le k$). 
 Then for $\gamma \ge 1,$ we have
$$\sum_{k=1}^N (-\R  \la_k + ( \cot \alpha) {\rm Im†} \la_k)^\gamma  \le L_{\gamma, d} \int_{\RR^d} \left[W(|\alpha|) - |b(|\alpha|)|^2\right]_-^{\gamma + d/2} dx,
$$
where $L_{\gamma, d}$ is the best possible value for which (\ref{LTR}) holds.
\end{theorem}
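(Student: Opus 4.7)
The plan is to reduce Theorem~\ref{LT} to the abstract Theorem~\ref{A-H} applied to a suitable rotation of $P$. For $\alpha\in\,]0,\pi/2]$, I introduce the operator
$$A_\alpha:=\frac{e^{-i(\alpha-\pi/2)}}{\sin\alpha}\,P=(1+i\cot\alpha)\,P,$$
associated with the sesquilinear form $\mathfrak{e}_\alpha:=(1+i\cot\alpha)\mathfrak{e}$ on the domain $D(\mathfrak{e}_0)$. Since $\R(1+i\cot\alpha)=1>0$, the form $\mathfrak{e}_\alpha$ inherits closedness, dense definition and boundedness from below from $\mathfrak{e}$, so $A_\alpha$ falls under the hypotheses of Theorem~\ref{A-H}; moreover, $\sigma(A_\alpha)=(1+i\cot\alpha)\sigma(P)$ with preserved algebraic multiplicities.

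The main computational step is to identify the symmetric part of $\mathfrak{e}_\alpha$ with the form of $H(\alpha)$. Decomposing $\mathfrak{e}=\mathfrak{h}_0+\mathfrak{k}$ into its symmetric and antisymmetric parts (so $\mathfrak{k}^*=-\mathfrak{k}$), the symmetric part of $\mathfrak{e}_\alpha$ equals $\mathfrak{h}_0+i\cot\alpha\cdot\mathfrak{k}$. Direct inspection of $\mathfrak{e}_0$ and $\mathfrak{e}_1$ shows that $\mathfrak{h}_0$ is the form of $H(\pi/2)=(i\nabla+\R a)^2-|\R a|^2+\R V$, while $\mathfrak{k}$ gathers the first-order terms in $\text{Im}\,a$ and the $i\,\text{Im}\,V$ contribution. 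Regrouping so that $\R a-\cot\alpha\,\text{Im}\,a=b(\alpha)$ sits inside a completed magnetic square and $\R V-\cot\alpha\,\text{Im}\,V=W(\alpha)$ becomes the potential, the symmetric part of $\mathfrak{e}_\alpha$ coincides with the form of $H(\alpha)=(i\nabla+b(\alpha))^2-|b(\alpha)|^2+W(\alpha)$.

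Granted this identification, part (i) follows from Theorem~\ref{A-H}(i): if $H(\alpha)\ge 0$ then $A_\alpha$ has no eigenvalue with negative real part, so every eigenvalue $\lambda$ of $P$ satisfies $\R((1+i\cot\alpha)\lambda)\ge 0$; writing $1+i\cot\alpha=(\sin\alpha)^{-1}e^{i(\pi/2-\alpha)}$, this is equivalent to $\lambda\in e^{i[\alpha-\pi,\alpha]}\RR^+$. For part (ii), Theorem~\ref{A-H}(ii) applied to $A_\alpha$ and the eigenvalues $(1+i\cot\alpha)\lambda_k$, which lie in $\{\R z<0\}$ precisely when $\lambda_k$ lies outside the sector $e^{i[\alpha-\pi,\alpha]}\RR^+$, gives
$$\sum_{k=1}^N\bigl(-\R\lambda_k+(\cot\alpha)\,\text{Im}\,\lambda_k\bigr)^\gamma=\sum_{k=1}^N\bigl(-\R((1+i\cot\alpha)\lambda_k)\bigr)^\gamma\le{\rm Tr}\,H(\alpha)_-^\gamma.$$
The classical self-adjoint Lieb-Thirring estimate~(\ref{LTR}) applied to $H(\alpha)=(i\nabla+b(\alpha))^2+[W(\alpha)-|b(\alpha)|^2]$ dominates the right-hand side by $L_{\gamma,d}\int[W(\alpha)-|b(\alpha)|^2]_-^{\gamma+d/2}dx$, which is the claimed bound (with $|\alpha|=\alpha$ in this case). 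The case $-\alpha\in\,]0,\pi/2]$ is treated by the entirely analogous argument with the same multiplier $(1+i\cot\alpha)$, whose argument now lies in $]-\pi/2,0]$, producing the mirror sector $-e^{i[\alpha-\pi,\alpha]}\RR^+$.

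The main obstacle I expect is the form-level verification of the second paragraph: one must carefully expand $\mathfrak{e}_1$, separate the pieces that are antisymmetric in $(u,v)$, and check via completion of squares that $\mathfrak{h}_0+i\cot\alpha\cdot\mathfrak{k}$ reassembles exactly as $(i\nabla+b(\alpha))^2-|b(\alpha)|^2+W(\alpha)$ on the form domain $D(\mathfrak{e}_0)$ (rather than on the smaller operator domain $D(A)$). A secondary technical point is that Theorem~\ref{A-H}(ii) requires $H(\alpha)$ to have discrete negative spectrum below zero, which is a standard consequence of the $L^{\gamma+d/2}$-integrability hypothesis on $[W(\alpha)-|b(\alpha)|^2]_-$.
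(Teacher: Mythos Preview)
Your proposal is correct and follows essentially the same route as the paper: apply Theorem~\ref{A-H} to a complex rotation of $P$, identify the symmetric part with (a positive multiple of) $H(\alpha)$, and then invoke the self-adjoint Lieb--Thirring estimate~(\ref{LTR}). The only cosmetic difference is that you use $A_\alpha=(1+i\cot\alpha)P=\frac{1}{\sin\alpha}e^{-i(\alpha-\pi/2)}P$ and obtain $H(\alpha)$ directly as the symmetric part, whereas the paper uses $A=\pm e^{-i(\alpha-\pi/2)}P$, gets $|\sin\alpha|\,H(\alpha)$, and divides by $|\sin\alpha|^\gamma$ at the end.
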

\begin{proof} We apply Theorem \ref{A-H} to  the operator  $A :=\pm e^{-i( \alpha- \frac{\pi}{2})}\, P $ with  $\pm \alpha \in ]0,\frac{\pi}{2}] $ fixed. The real part of $A$ (in the sense of quadratic forms, see the beginning  of the previous section) is 
\begin{eqnarray}\nonumber
\R A &= &\pm (\sin \alpha \; \R P - \cos \alpha \;  {\rm Im†} P)\\\nonumber
&=&-|\sin \alpha| \Delta + i|\sin \alpha| \Big(\R a(x).\nabla+ \nabla.\R a(x)\Big) +|\sin \alpha| \R V \\\nonumber
 & & \mp i(\cos \alpha) \Big({\rm Im} a(x).\nabla+ \nabla.{\rm Im} a(x)\Big) \mp  (\cos \alpha) {\rm Im}   V.\\\nonumber
&=&  |\sin \alpha| \, H( \alpha).\nonumber
\end{eqnarray}
 Observing that eigenvalues of $A$ are eigenvalues of $P$  times   $\pm e^{-i( \alpha- \frac{\pi}{2})}$, and that 
$$\R \Big(\pm e^{-i( \alpha- \frac{\pi}{2})}\la \Big) \geq 0 \Leftrightarrow \la \in \pm e^{i[\alpha - \pi, \alpha]}{\RR}^+ ,$$ 
we deduce Theorem \ref{LT}  from Theorem \ref{A-H} and  (\ref{LTR}). In fact, for the proof of ii),  we have:
\begin{equation}\label{LTalpha}
\sum_k \Big(\R(\pm e^{-i( \alpha- \frac{\pi}{2})}\la_k) \Big)_-^\gamma \le |\sin \alpha |^{\gamma} {\rm Tr}(H(\alpha))_-^\gamma.
\end{equation}
Then, dividing both sides by $|\sin \alpha |^\gamma$, yields:
$$ \sum_k (\R  \la_k - ( \cot \alpha) {\rm Im†} \la_k)_-^\gamma   \le  {\rm Tr}(H(\alpha))_-^\gamma.$$
and  Theorem \ref{LT} follows then  from (\ref{LTR}).

\end{proof}

 We also have  the following estimates for $\sum \vert \la_k \vert^\gamma.$ 

\begin{corollary}\label{LTav}
Under the assumptions and  notation of Theorem \ref{LT} ii), we have  for $\gamma\geq 1$,  $\varepsilon \in ]0, \frac{\pi}{2}]$ and $\alpha \in ]0, \frac{\pi}{2}]$:
$$
\sum_{\la_k \in   e^{i[ \alpha + \varepsilon, \alpha - \varepsilon + \pi]}{\RR}^+}\!\!\! \!\!\!\!\!\!\!\!\! | \la_k|^\gamma \le \Big( \frac{|\sin \alpha|}{\sin \varepsilon}\Big)^\gamma  L_{\gamma, d} \int_{\RR^d} \left[W(|\alpha|) - |b(|\alpha|)|^2\right]_-^{\gamma + d/2} dx.$$
If $\alpha \in [- \frac{\pi}{2}, 0[,$ then the  same estimate holds for the sum over  $\la_k \in -e^{i[ \alpha + \varepsilon, \alpha - \varepsilon + \pi]}{\RR}^+.$ 
\end{corollary}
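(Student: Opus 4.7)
The plan is to deduce the corollary directly from inequality (\ref{LTalpha}) in the proof of Theorem \ref{LT}, using a purely geometric comparison between $|\la_k|$ and $\R(e^{-i(\al - \pi/2)}\la_k)$ for $\la_k$ restricted to the smaller sector. Writing $\la_k = r e^{i\theta}$ with $r = |\la_k|$, the hypothesis $\la_k \in e^{i[\al + \varepsilon,\, \al - \varepsilon + \pi]}\RR^+$ translates into $\theta - \al \in [\varepsilon,\, \pi - \varepsilon]$, so that $\theta - \al$ is bounded away from the endpoints of $[0,\pi]$ by at least $\varepsilon$.

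The first step is the elementary trigonometric identity
$$\R\bigl(e^{-i(\al - \pi/2)}\la_k\bigr) \;=\; r\cos(\theta + \pi/2 - \al) \;=\; -r\sin(\theta - \al),$$
from which one reads off $\bigl(\R(e^{-i(\al - \pi/2)}\la_k)\bigr)_- = r\sin(\theta - \al) \ge r\sin \varepsilon$, the last inequality holding because $\sin$ is bounded below by $\sin \varepsilon$ on $[\varepsilon,\pi - \varepsilon]$. Rearranging gives
$$|\la_k|^\gamma \;\le\; \frac{1}{(\sin \varepsilon)^\gamma}\bigl(\R(e^{-i(\al - \pi/2)}\la_k)\bigr)_-^\gamma$$
for every $\la_k$ in the sub-sector and every $\gamma \ge 1$.

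The second step is to sum this pointwise bound over the eigenvalues $\la_k$ lying in $e^{i[\al + \varepsilon,\, \al - \varepsilon + \pi]}\RR^+$ and then invoke (\ref{LTalpha}) together with the self-adjoint Lieb--Thirring bound (\ref{LTR}) applied to $H(\al)$. This chain yields
$$\sum_{\la_k \in e^{i[\al+\varepsilon,\, \al - \varepsilon+\pi]}\RR^+} |\la_k|^\gamma \;\le\; \frac{|\sin \al|^\gamma}{(\sin \varepsilon)^\gamma}\,{\rm Tr}(H(\al))_-^\gamma \;\le\; \Bigl(\frac{|\sin \al|}{\sin \varepsilon}\Bigr)^\gamma L_{\gamma,d} \int_{\RR^d} \bigl[W(|\al|) - |b(|\al|)|^2\bigr]_-^{\gamma + d/2}\,dx,$$
which is the claimed estimate. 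The case $\al \in [-\pi/2, 0[$ is handled identically by selecting the minus sign in (\ref{LTalpha}), i.e.\ by applying the argument to the operator $-e^{-i(\al - \pi/2)}P$ already considered in Theorem \ref{LT}.

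No serious obstacle is anticipated: all the non-trivial analytic content (Theorem \ref{A-H} and the self-adjoint Lieb--Thirring inequality (\ref{LTR})) has already been invoked in Theorem \ref{LT}. The only point that deserves a moment of care is the trigonometric lower bound $\sin(\theta - \al) \ge \sin \varepsilon$, which precisely quantifies the loss incurred when replacing the sector-distance functional $(\R(e^{-i(\al - \pi/2)}\la))_-$ by the modulus $|\la|$, and explains the appearance of the factor $(|\sin \al|/\sin \varepsilon)^\gamma$ in the final bound.
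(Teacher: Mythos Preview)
Your proposal is correct and follows essentially the same route as the paper: write each eigenvalue in polar form, compute $\R(e^{-i(\al-\pi/2)}\la_k)=-|\la_k|\sin(\theta-\al)$, use $\sin(\theta-\al)\ge\sin\varepsilon$ on the sub-sector, and insert the resulting pointwise bound into (\ref{LTalpha}) before applying (\ref{LTR}). The paper's argument is identical up to notation (it writes $\alpha_k$ for the argument instead of $\theta$).
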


Note that for $ \alpha \in ]0,\frac{\pi}2] $ such that 
$\Big(W(\alpha) - |b(\alpha)|^2\Big)_- \in L^{\gamma + d/2}(\RR^d)$,
the sum of the above estimates for $\alpha$  and for $-\alpha$ yields estimate for eigenvalues outside of  the sector $e^{i[ -\alpha - \varepsilon, \alpha + \varepsilon]}{\RR}^+$:
$$\sum_{\la_k \in   e^{i[ \alpha + \varepsilon, 2\pi -\alpha - \varepsilon]}{\RR}^+}\!\!\! \!\!\!\!\!\!\!\!\! | \la_k|^\gamma \le 2 \Big( \frac{|\sin \alpha|}{\sin \varepsilon}\Big)^\gamma  L_{\gamma, d} \int_{\RR^d} \left[W(|\alpha|) - |b(|\alpha|)|^2\right]_-^{\gamma + d/2} dx.$$

\begin{proof}
Assume that $\alpha \in ]0, \frac{\pi}{2}]$. Let $\alpha_k \in [\alpha  + \varepsilon, \alpha - \varepsilon + \pi ] $ 
be the argument of the eigenvalue $\la_k \in  \pm e^{i[ \alpha + \varepsilon, \alpha - \varepsilon + \pi]}{\RR}^+$. Since $\la_k = e^{i \alpha_k}\, |\la_k|$, then 
$$\R(- e^{-i( \alpha- \frac{\pi}{2})}\la_k)= |\la_k|\,  \R(- e^{-i( \alpha-\alpha_k- \frac{\pi}{2})})= - |\la_k| \, \sin( \alpha-\alpha_k) .$$
Therefore,
$$- \R( e^{-i( \alpha- \frac{\pi}{2})}\la_k) \geq  |\la_k|\, \sin \varepsilon.$$
Inserting this inequality in (\ref{LTalpha}), we obtain
$$\sum_{\la_k \in   e^{i[\alpha + \varepsilon, \alpha - \varepsilon + \pi ]}{\RR}^+} | \la_k|^\gamma \le \Big( \frac{|\sin \alpha|}{\sin \varepsilon}\Big)^\gamma {\rm Tr}(H(\alpha))_-^\gamma.$$
The assertion in Corollary \ref{LTav} follows then from  (\ref{LTR}).\\
The arguments are similar if $\alpha \in [-\frac{\pi}{2}, 0[.$
\end{proof}
Of course,  taking only one eigenvalue $\la \in e^{i[ \alpha + \varepsilon, \alpha - \varepsilon + \pi]}{\RR}^+$ of $P$, one has 
$$\vert \la \vert \le \Big( \frac{|\sin \alpha|}{\sin \varepsilon}\Big)^\gamma  L_{1, d} \int_{\RR^d} \left[W(|\alpha|) - |b(|\alpha|)|^2\right]_-^{1 + d/2} dx.$$
For  $P = -\Delta + V,$ a  result with a better constant is obtained in \cite{AAD} in the case of dimension $d = 1.$ Indeed, Theorem 4 in \cite{AAD} says in this case that 
$$\vert \la \vert \le \frac{1}{4} \left( \int_{\RR} \vert V(x) \vert dx \right)^2.$$

The previous results  can be applied   to estimate  the moments of  resonances for Schr\"odinger operators. 
Consider a potential $V \in C^\infty(\RR^d,\RR)$ such that  for every $x \in \RR^d,$  $z \to V(zx)$ has an analytic  extension 
to a  neighbourhood  of  the sector of angle 
$\theta/2 $ (for a  fixed $\theta  \in ]0, \pi]$).  Assume also that this analytic extension is relatively compact with respect to $-\Delta.$
 Then,   the resonances of $-h^2 \Delta + V(x)$ ($h>0$) in
 the sector $S_\theta := e^{i]-\theta, 0]} \RR^+$, $\theta \in [0,\pi]$ are the eigenvalues of the non-self-adjoint operator
$$P_\theta := - e^{-i\theta} \, h^2 \Delta + V(e^{i\theta/2}x).$$
See for instance \cite{CyFr}, \cite{HiSi} for a general introduction to the theory of resonances.  

First, observe that  
$$\sigma(P_\theta) = h^{2} e^{-i\theta} \, \sigma (h^{-2} e^{i\theta} P_\theta) =  h^{2} e^{-i\theta} \, \sigma (-\Delta + M),  $$
where $M(x) = h^{-2} e^{i\theta} V(e^{i\theta/2}x).$   It follows from assertion i) of Theorem \ref{LT} (which we apply  to $-\Delta + M$) that if the operator 
$$H_\theta(\alpha) := - \Delta + \frac{h^{-2}}{\sin \alpha} \R \Big( e^{-i( \alpha-\theta- \frac{\pi}{2})}V(e^{i\theta/2}\,.\,)\Big)$$
is non-negative, then there are no resonances in the sector $S_{\alpha-\theta}.$ 
Related results on localization  of resonances are studied in several
settings. Localisation results which  relay  on numerical range are given in  \cite{AAD}.
 A lot of results, often related to some dynamical assumptions, are obtained by microlocal 
arguments  (see for instance \cite{Bu}, \cite{SjZw}, \cite{Ma} and
\cite{NoZw}). For the magnetic Schr\"odinger operator, we
also mention  \cite{BoBrRa} where localization of resonances is obtained by perturbation methods.

If $H_\theta(\alpha)$ is not non-negative, we can apply Corollary \ref{LTav}  to obtain an estimate for moments of resonances. Let 
$w_j \in \sigma(P_\theta) \cap S_\theta$ be resonances  of  $-h^2 \Delta + V(x)$. We apply  Corollary \ref{LTav} (to $-\Delta  +M$) and obtain 
$$
\sum_{w_j \in e^{i[ \alpha -\theta + \varepsilon, 0]}{\RR}^+}\!\!\! \!\!\!\!\!\!\!\!\! | h^{-2} e^{i\theta} w_j |^\gamma \le $$
$$\Big( \frac{|\sin \alpha|}{\sin \varepsilon}\Big)^\gamma  L_{\gamma, d} \int_{\RR^d} \left[\frac{h^{-2}}{\sin \alpha} \R \Big( e^{-i( \alpha-\theta- \frac{\pi}{2})}V(e^{i\theta/2}x)\Big)\right]_-^{\gamma + d/2} dx.$$
Consequently, for any $h>0$, $\alpha \in ]0,\theta[$, $\varepsilon \in ]0, \theta-\alpha[$, we have:
 
\begin{eqnarray*}
\sum_{w_j \in e^{i[ \alpha -\theta + \varepsilon, 0]}{\RR}^+}\!\!\! \!\!\!\!\!\!\!\!\! |  w_j |^\gamma\!\!\! &\le& \, \frac{h^{-d} L_{\gamma, d}}{(\sin \varepsilon)^\gamma\,  (\sin \alpha )^{d/2 } }
\int_{\RR^d} \left[ {\rm Im}\Big(e^{i( \theta-\alpha)}V(e^{i\theta/2}x)  \Big) \right]_+^{\gamma + d/2} dx.
\end{eqnarray*}

In particular, we have proved the following proposition. 
\begin{proposition}\label{estiresonances}
Suppose that $V   \in C^\infty(\RR^d,\RR)$ and satisfies the above analyticity and relative compactness (with respect to $-\Delta$) assumptions.   
Then for $\gamma \geq 1$  and  $\varphi \in [0, \theta[$, the resonances of $-\Delta + V$ in $e^{i[ -\varphi, 0]}{\RR}^+$ satisfy:
\begin{eqnarray*}
\sum_{w_j \in e^{i[ -\varphi, 0]}{\RR}^+}\!\!\! \!\!\!\ |  w_j |^\gamma\!\!\! &\le& \, \frac{h^{-d} L_{\gamma, d}}{(\sin \varepsilon)^\gamma\,  (\sin (\theta-\varphi-\varepsilon))^{d/2 } }
\\
&&\times  
\int_{\RR^d} \left[ {\rm Im}\Big(e^{i( \varphi + \varepsilon)}V(e^{i\theta/2}x)  \Big) \right]_+^{\gamma + d/2} dx.
\end{eqnarray*}
for all $\varepsilon \in ]0, \theta-\varphi] $.
\end{proposition}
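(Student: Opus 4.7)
The plan is to deduce Proposition \ref{estiresonances} as an immediate reparametrization of the displayed inequality that precedes it. That inequality is itself a direct application of Corollary \ref{LTav} to the rescaled operator $-\Delta + M$ with $M(x) = h^{-2} e^{i\theta} V(e^{i\theta/2} x)$, via the identity $\sigma(P_\theta) = h^2 e^{-i\theta}\sigma(-\Delta + M)$. All the analytic content is already in place; what remains is book-keeping.

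First I would set $\alpha := \theta - \varphi - \varepsilon$ in the preceding display. This turns the indexing sector $e^{i[\alpha - \theta + \varepsilon, 0]}{\RR}^+$ into precisely $e^{i[-\varphi, 0]}{\RR}^+$, matching the statement; the admissibility condition $\alpha \in \, ]0, \theta[$ translates into $\varepsilon \in \, ]0, \theta - \varphi[$, with the endpoint $\varepsilon = \theta - \varphi$ corresponding to the limiting case $\alpha = 0$.

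Next I would simplify the integrand and collect the factors. The identity $e^{-i(\alpha - \theta - \pi/2)} = i\, e^{i(\varphi + \varepsilon)}$, together with $\R(iz) = -\,{\rm Im}\, z$, yields
\[
\R\!\left(e^{-i(\alpha - \theta - \pi/2)} V(e^{i\theta/2} x)\right) = -\,{\rm Im}\!\left(e^{i(\varphi + \varepsilon)} V(e^{i\theta/2} x)\right);
\]
since $\sin\alpha = \sin(\theta - \varphi - \varepsilon) > 0$, the negative part appearing under the integral equals $\frac{h^{-2}}{\sin(\theta - \varphi - \varepsilon)}\bigl[{\rm Im}(e^{i(\varphi + \varepsilon)} V(e^{i\theta/2}x))\bigr]_+$. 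Raising to the power $\gamma + d/2$, combining with the external prefactor $(\sin\alpha / \sin\varepsilon)^\gamma L_{\gamma, d}$, and then multiplying through by $h^{2\gamma}$ (to clear the $|h^{-2}e^{i\theta} w_j|^\gamma = h^{-2\gamma}|w_j|^\gamma$ on the left) produces exactly the stated estimate, the $(\sin\alpha)^\gamma$ canceling a factor of $(\sin\alpha)^{\gamma+d/2}$ to leave $(\sin(\theta - \varphi - \varepsilon))^{d/2}$ in the denominator. I anticipate no real obstacle; the only delicate point is the sign interchange between real and imaginary parts, forced by the factor of $i = e^{i\pi/2}$ in the exponential identity, which is easy to mis-state if carried out carelessly.
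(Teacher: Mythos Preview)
Your proposal is correct and follows exactly the route the paper takes: the proposition is stated in the paper as an immediate consequence of the preceding display, obtained by the substitution $\alpha = \theta - \varphi - \varepsilon$ together with the identity $e^{-i(\alpha-\theta-\pi/2)} = i\,e^{i(\varphi+\varepsilon)}$ and the resulting interchange $[\R(\cdot)]_- = [\mathrm{Im}(\cdot)]_+$. Your bookkeeping of the $h$-- and $\sin\alpha$--powers matches the paper's, and your remark that the endpoint $\varepsilon = \theta-\varphi$ (i.e.\ $\alpha = 0$) is only a vacuous limiting case is accurate.
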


\noindent {\bf Remark.} The above estimate give the well known upper bound $O(h^{-d})$ for the number of resonances in a compact domain (see \cite{SjZw2}). Here, we have an explicit coefficient of $h^{-d}$.

On the other hand it is clear that Corollary \ref{LTav} allows to extend Proposition \ref{estiresonances} to first order perturbations.

\end{document}